\newcommand{\R}{\mathbf{R}}
\newcommand{\set}[1]{\left\{#1\right\}}
\newcommand{\norm}[1]{\Vert#1\Vert}
\newcommand{\eoproof}{\hfill$\square$}
\begin{document}

\title*{Real-Time Sequential Convex Programming for Optimal Control Applications}
\titlerunning{Real-Time Sequential Convex Programming}
\author{Tran Dinh Quoc\inst{\dagger}, Carlo Savorgnan\inst{\dagger} \and
Moritz Diehl\inst{\dagger}}
\institute{$^\dagger$ Department of Electrical Engineering (ESAT-SCD) and Optimization in Engineering Center (OPTEC), K.U. Leuven, 
Kasteelpark Arenberg 10, B-3001 Leuven, Belgium \textrm{\{quoc.trandinh, carlo.savorgnan, moritz.diehl\}@esat.kuleuven.be}}
%
%
\maketitle

\begin{abstract}
This paper proposes real-time sequential convex programming (RTSCP), a method for solving a sequence of nonlinear optimization problems
depending on an online parameter.
We provide a contraction estimate for the proposed method and, as a byproduct, a new proof of the local convergence of sequential convex
programming.
The approach is illustrated by an example where RTSCP is applied to nonlinear model predictive control.
\end{abstract}

\section{Introduction and motivation}\label{se:intro}
\vskip -0.3cm
Consider a parametric optimization problem of the form:
\begin{equation}\label{eq:param_prob}
\makeatletter
\def\tagform@#1{\maketag@@@{#1\@@italiccorr}}
\makeatother
\left\{\begin{array}{cl}
\displaystyle\min_x & c^T x \\
\textrm{s.t.}  & g(x) + M\xi  = 0,\; x\in\Omega,
\end{array}\right.
\tag{$\textrm{P}(\xi)$}
\end{equation}
where $x,c\in\R^n$, $g:\R^n\to\R^m$ is a nonlinear function, $\Omega\subseteq\R^n$ is a convex set, the parameter 
$\xi$ belongs to a given set $\Gamma\subseteq\R^p$, and $M\in\R^{m\times p}$ is a given matrix.

This paper deals with the efficient calculation of approximate solutions to a sequence of problems of the form \ref{eq:param_prob} where 
the parameter $\xi$ is varying slowly. In other words, for a sequence $\{\xi_k\}_{k\geq 1}$ such that $\norm{M(\xi_{k+1}-\xi_k)}$ is small,
we want to solve problem $\textrm{P}(\xi_k)$ in an efficient way without requiring too much accuracy in the result.

In practice, sequences of problems of the form \ref{eq:param_prob} can be solved in the framework of nonlinear model predictive control
(MPC). MPC is an optimal control technique which avoids computing an optimal control law in a feedback form, which is often a numerically
intractable problem. 
A popular way of solving the optimization problem to calculate the control sequence is using either interior point methods 
\cite{Biegler2000} or sequential quadratic programming (SQP) \cite{Biegler1991,Bock2000,Helbig1998}. A drawback of using SQP is that this
method may require several iterations before convergence and therefore  the computation time may be too large for a real-time
implementation.
A solution to this problem was proposed in \cite{Diehl2002b}, where the real-time iteration (RTI) technique was introduced. Extensions to
the original idea and some theoretical results are reported in \cite{Diehl2005,Diehl2007b,Diehl2005b}. Similar nonlinear MPC algorithms are
proposed in~\cite{Ohtsuka2004,Zavala2009}.
RTI is based on the observation that for several practical applications of nonlinear MPC, the data of two successive optimization problems 
to be solved in the MPC loop is numerically close.
In particular, if we express these optimization problems in the form \ref{eq:param_prob}, the parameter $\xi$ usually represents the 
current state of the system, which, for most applications, doesn't change significantly in two successive measurements.
The RTI technique consists of performing only the first step of the usual SQP algorithm which is initialized using the solution calculated 
in the previous MPC iteration.

\vskip 0.1cm
\noindent\textbf{Contribution.}
Before stating the main contributions of the paper we need to outline the (full-step) sequential convex programming (SCP) algorithm framework applied to
problem $\textrm{P}(\xi)$ for a given value $\xi_k$ of the parameter $\xi$:
\begin{enumerate}
\item Choose a starting point $x^0\in\Omega$ and set $j := 0$.
\item Solve the convex approximation of $\textrm{P}(\xi_k)$:
\begin{equation}\label{eq:SCP_method}
\makeatletter
\def\tagform@#1{\maketag@@@{#1\@@italiccorr}}
\makeatother
\left\{\begin{array}{cl}
\displaystyle
\min_x &c^T x \\
\textrm{s.t.} &g'(x^j)(x-x^j) + g(x^j) + M\xi_k = 0, \\
       &x \in \Omega
\end{array}\right.
\tag{$\textrm{P}_{\textrm{cvx}}(x^j;\xi_k)$}
\end{equation}
to obtain a solution $x^{j+1}$, where $g'(\cdot)$ is the Jacobian matrix of $g(\cdot)$.
\item If the stopping criterion is satisfied then: STOP. Otherwise, set $j := j+1$ and go back to Step 2.
\end{enumerate}
The real-time sequential convex programming (RTSCP) method proposed in this paper combines the RTI technique and the SCP algorithm: 
instead of solving with SCP every $\textrm{P}(\xi_k)$ to full accuracy, RTSCP solves only one convex approximation
$\textrm{P}_{\text{cvx}}(x^{k-1};\xi_k)$ using as a linearization point $x^{k-1}$, which is the approximate solution of
$\textrm{P}(\xi_{k-1})$ calculated at the previous iteration.
Therefore, RTSCP solves a sequence of convex problems corresponding to the different problems $\textrm{P}(\xi_k)$. This method is suitable
for the problems that contain a general convex substructure such as nonsmooth convex cost, second order or semidefinte cone constraints which
may not be convenient for SQP methods.

In this paper we provide a contraction estimate for RTSCP which can be interpreted in the following way: if the linearization of the first 
problem $\textrm{P}(\xi_0)$ is close enough to the solution of the problem and the quantity $\norm{M(\xi_{k+1}-\xi_k)}$ is not too big
(which is the case for many problems arising from nonlinear MPC), RTSCP provides a sequence of good approximations of the sequence of
optimal solutions of the problems $\textrm{P}(\xi_k)$. As a byproduct of this result, we obtain a new proof of local convergence for the SCP
algorithm. 

The paper is organized as follows. Section \ref{se:RTSCP} proposes a description of the RTSCP algorithm. Section \ref{se:estimates} proves
the contraction estimate for the RTSCP method. The last section shows an application of the RTSCP method to nonlinear MPC.

\section{The RTSCP method}\label{se:RTSCP}
\vskip -0.3cm
As mentioned in the  previous section, SCP solves a possibly nonconvex optimization problem by solving a sequence of convex
subproblems which approximate the original problem locally. In this section, we combine RTI and SCP to obtain the RTSCP method. The method
consists of the following steps:
\begin{itemize}
\item[]{\bf Initialization. }  Find an initial value $\xi_1\in\Gamma$, choose a starting point $x^0\in\Omega$ and compute
the information needed at the first iteration such as derivatives, dependent variables, \dots. Set $k:=1$.
\item[] {\bf Iteration. }
\begin{enumerate}
\item Solve $\textrm{P}_{\textrm{cvx}}(x^{k-1};\xi_{k})$ (see Section \ref{se:estimates}) to obtain a solution $x^k$.
\item Determine a new parameter $\xi_{k+1}\in\Gamma$, update (or recompute) the information needed for the next step. Set $k:=k+1$ and go back to
Step 1.
\end{enumerate} 
\end{itemize}
One of the main tasks of the RTSCP method is to solve the convex subproblem $\textrm{P}_{\textrm{cvx}}(x^{k-1};\xi_{k})$  at
each iteration. This work can be done by either implementing an optimization method which exploits the
problem structure or relying on one of the many efficient software tools available nowadays.

\begin{remark}\label{re:init_point}
In the RTSCP method, a starting point $x^0$ in $\Omega$ is required. It can be any point in $\Omega$. But as we will show later [Theorem 1], if we choose $x^0$ close to the true solution of $\textrm{P}(\xi_0)$ and $\norm{M(\xi_1-\xi_0)}$ is sufficiently small, then the solution $x^1$ of
$\textrm{P}_{\textrm{cvx}}(x^0,\xi_1)$ is still close to the true solution of $\textrm{P}(\xi_1)$. Therefore, in practice, problem $\textrm{P}(\xi_0)$ can be solved approximately to get a starting point $x^0$.
\end{remark}

\begin{remark}\label{re:SQP}
Problem $\textrm{P}(\xi)$ has a linear cost function. However, RTSCP can deal directly with the problems where the cost function $f(x)$ is convex. If the cost function is quadratic and $\Omega$ is a polyhedral set then the RTSCP method collapses to the
real-time iteration of a Gauss-Newton method (see, e.g. \cite{Diehl2002}). 
\end{remark} 

\begin{remark}\label{re:measure}
In MPC, the parameter $\xi$ is usually the value of the state variables of a dynamic system at the current time $t$. In this case, $\xi$ is measured at each sample time based on the real-world dynamic system (see example in Section \ref{se:example}).
\end{remark}  
   
\section{RTSCP contraction estimate}\label{se:estimates}
\vskip -0.3cm
The KKT conditions of problem \ref{eq:param_prob} can be written as
\begin{equation}\label{eq:KKT}
\begin{cases}
0\in c + g'(x)^T\lambda + N_{\Omega}(x)\\
0 = g(x) + M\xi,
\end{cases}
\end{equation}
where $N_{\Omega}(x) := \left\{u\in\R^n\;|\; u^T(v-x) \geq 0, \forall v\in\Omega\right\}$ if $x\in\Omega$ and $N_{\Omega}(x):=\emptyset$ if
$x\notin \Omega$, is the normal cone of $\Omega$ at $x$, and $\lambda$ is a Lagrange multiplier associated with $g$. 
Note that the constraint $x\in\Omega$ is implicitly included in the first line of \eqref{eq:KKT}. 
A pair $\bar{z}(\xi):=(\bar{x}(\xi), \bar{\lambda}(\xi))$ satisfying \eqref{eq:KKT} is called a KKT point  and $\bar{x}(\xi)$ is called a
stationary point of \ref{eq:param_prob}. We denote by $\Lambda(\xi)$ the set of KKT points at $\xi$.
 
In the sequel, we use $z$ for a pair $(x,\lambda)$, $\bar{z}^k$ is a KKT point of \ref{eq:param_prob} at $\xi_k$ and $z^k$ is a KKT point 
of \ref{eq:subprob_x} (defined below) at $\xi_{k+1}$ for $k\geq 0$. The symbols $\norm{\cdot}$ and $\norm{\cdot}_F$ stand for the $L_2$-norm and the Frobenius
norm, respectively.
 
Now, let us define $\varphi(z;\xi) := \begin{pmatrix}c + g'(x)^T\lambda\\ g(x) + M\xi\end{pmatrix}$ and $K:= \Omega\times\mathbf{R}^m$, then
the KKT system \eqref{eq:KKT} can be expressed as a parametric \textit{generalized equation} \cite{Robinson1980}:
\begin{equation}\label{eq:gen_equa}
0 \in \varphi(z;\xi) + N_K(z), 
\end{equation}
where $N_K(z)$ is the normal cone of $K$ at $z$.

Let $x^k\in\Omega$ be a solution of $\textrm{P}_{\textrm{cvx}}(x^{k-1};\xi_{k})$ at the $k$-iteration of RTSCP. We consider the following
parametric convex subproblem at Step 1 of the RTSCP algorithm:
\begin{equation}\label{eq:subprob_x}
\makeatletter
\def\tagform@#1{\maketag@@@{#1\@@italiccorr}}
\makeatother
\left\{\begin{array}{cl}
\displaystyle\min_x &c^T x \\
 \textrm{s.t.}  &g'(x^k)(x-x^k) + g(x^k) + M\xi_{k+1} = 0,\\
       &x\in \Omega.
\end{array}\right.
\tag{$\textrm{P}_{\textrm{cvx}}(x^k;\xi_{k+1})$}
\end{equation}
\vskip -0.1cm
If we define $\hat\varphi(z;x^k,\xi_{k+1}) := \begin{pmatrix}c + g'(x^k)^T\lambda\\ g(x^k) + g'(x^k)(x-x^k) + M\xi_{k+1}\end{pmatrix}$  then
the KKT condition for $\textrm{P}_{\textrm{cvx}}(x^k,\xi_{k+1})$ can also be represented as a parametric generalized equation:
\begin{equation}\label{eq:gen_eq_sub}
0 \in \hat\varphi(z; x^k, \xi_{k+1}) + N_K(z), 
\end{equation}
where $\eta_k := (x^k,\xi_{k+1})$ plays a role of parameter.
Suppose that the Slater constraint qualification condition holds for problem \ref{eq:subprob_x}, i.e.:
 \begin{equation*}
\text{ri}(\Omega)\cap\set{x:  g(x^k)+ g'(x^k)(x-x^k) + M\xi_{k+1} = 0}\neq \emptyset,
\end{equation*}
where $\text{ri}(\Omega)$ is the set of the relative interior points of $\Omega$. Then by convexity of $\Omega$, a point $z^{k+1} =
(x^{k+1},\lambda^{k+1})$ is a KKT point of the subproblem \ref{eq:subprob_x} if and only if $x^{k+1}$ is a solution of
\ref{eq:subprob_x} with a corresponding multiplier $\lambda^{k+1}$.

For a given KKT point $\bar{z}^k \in \Lambda(\xi_k)$ of $\textrm{P}(\xi_k)$, we define a set-valued mapping:
\begin{equation}\label{eq:L_def}
L(z; \xi) := \hat\varphi(z; \bar{x}^k, \xi) + N_K(z), 
\end{equation}
and $L^{-1}(\delta;\xi) := \set{z\in\R^{n+m}: \delta \in L(z; \xi) }$ for $\delta\in\R^{n+m}$ is its inverse mapping. 
Note that $0\in L(z;\xi)$ is indeed the KKT condition of $\textrm{P}_{\textrm{cvx}}(\bar{x}^k;\xi)$.   
For each $k\geq 0$, we make the following assumptions: 
\begin{itemize}
\item[]{\bf (A1)} The set of the KKT points $\Lambda_0 := \Lambda(\xi_0)$ is nonempty.
\item[]{\bf (A2)} The function $g$ is twice continuously differentiable on its domain.
\item[]{\bf (A3)} There exist a neighborhood $\mathcal{N}_{0}\subset\R^{n+m}$ of the origin and a neighborhood $\mathcal{N}_{\bar{z}^k}$ of
$\bar{z}^k$ such that for each $\delta\in \mathcal{N}_{0}$, $\psi_{k}(\delta) := \mathcal{N}_{\bar{z}^k}\cap L^{-1}(\delta; \xi)$ is
single-valued and Lipschitz continuous on $\mathcal{N}_{0}$ with a Lipschitz constant $\gamma > 0$.
\item[]{\bf (A4)} There exists a constant $0 \leq \kappa < 1/\gamma$ such that $\norm{E_g(\bar{z}^k)}_F \leq \kappa$,
where $E_g(\bar{z}^k) := \sum_{i=1}^m\bar{\lambda}^k_i\nabla^2g_i(\bar{x}^k) $.
\end{itemize}
Assumptions (A1) and (A2) are standard in optimization, while Assumption (A3) is related to the \textit{strong regularity} concept 
introduced by Robinson \cite{Robinson1980} for the parametric generalized equations of the form \eqref{eq:gen_equa}. It is important to note
that the strong regularity assumption follows from the strong second order sufficient optimality in nonlinear programming when the constraint qualification
condition (LICQ) holds \cite{Robinson1980}~\textrm{[Theorem 4.1]}.
In this paper, instead of the generalized linear mapping $L_R(z; \xi) :=  \varphi(\bar{z}^k;\xi) + \varphi'(\bar{z}^k)(z-\bar{z}^k) +
N_K(z)$  used in \cite{Robinson1980} to define strong regularity, in Assumption (A3) we use a similar form $L(z;\xi) =
\varphi(\bar{z}^k;\xi) + D(\bar{z}^k)(z-\bar{z}^k) + N_K(z)$, where  
\begin{equation*}\label{eq: D_def}
\varphi'(\bar{z}^k) = \begin{bmatrix}E_g(\bar{z}^k) & g'(\bar{x}^k)^T\\ g'(\bar{x}^k) & 0\end{bmatrix}, ~\text{and}~~ D(\bar{z}^k) = \begin{bmatrix}0 & g'(\bar{x}^k)^T\\ g'(\bar{x}^k) & 0\end{bmatrix}.
\end{equation*}
These expressions are different from each other only at the left-top corner $E_g(\bar{z}^k)$, the Hessian of the Lagrange function. 
Assumption (A3) corresponds to the standard strong regularity assumption (in the sense of Robinson \cite{Robinson1980}) of the subproblem \ref{eq:subprob_x} at the point $\bar{z}^{k}$, a KKT point of \eqref{eq:gen_equa} at $\xi=\xi_k$.

Assumption (A4) implies that either the  function $g$ should be ``weakly nonlinear'' (small second derivatives) in a neighborhood of  a stationary point or the corresponding Langrage multipliers are sufficiently small in this neighborhood. The latter case occurs if the optimal value of \ref{eq:param_prob} depends only weakly on perturbations of the nonlinear constraint $g(x)+M\xi=0$. 

\begin{theorem}[Contraction Theorem]\label{th:contract}
Suppose that Assumptions {\bf(A1)}-{\bf (A4)} are satisfied. Then there exist neighborhoods $\mathcal{N}_{\tau}$ of $\xi_k$, $\mathcal{N}_{\rho}$ of $\bar{z}^k$ and a single-valued function $\bar{z}: \mathcal{N}_{\tau}\to \mathcal{N}_{\rho}$ such that for all $\xi_{k+1}\in \mathcal{N}_{\tau}$, $\bar{z}^{k+1}:=\bar{z}(\xi_{k+1})$ is the unique KKT point of $\textrm{P}(\xi_{k+1})$ in $\mathcal{N}_{\rho}$ with respect  to parameter $\xi_{k+1}$ (i.e. $\Lambda(\xi_{k+1})\neq\emptyset$). 
Moreover, for any $\xi_{k+1} \in \mathcal{N}_{\tau}$, $z^{k} \in \mathcal{N}_{\rho}$ we have
\begin{equation}\label{eq:contr_est}
\norm{z^{k+1} - \bar{z}^{k+1}} \leq \omega_k\norm{z^k - \bar{z}^k} + c_k\norm{M(\xi_{k+1}-\xi_k)},
\end{equation}
where $\omega_k  \in (0,1)$, $c_k>0$ are constant, and $z^{k+1}$ is a KKT point of \ref{eq:subprob_x}.
\end{theorem}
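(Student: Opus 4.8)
The plan is to reduce everything to the strongly regular linearized map $L(\cdot;\xi_k)$ of Assumption (A3) and its single-valued, $\gamma$-Lipschitz inverse $\psi_k$, and then to run a Banach fixed-point argument for existence/uniqueness followed by a perturbation estimate for the bound \eqref{eq:contr_est}. Two preliminary facts drive the whole argument. First, $\psi_k(0)=\bar{z}^k$, because $\bar{z}^k\in\Lambda(\xi_k)$ means $0\in\varphi(\bar{z}^k;\xi_k)+N_K(\bar{z}^k)=L(\bar{z}^k;\xi_k)$ (the linearization of $g$ at $\bar{x}^k$ agrees with $\varphi$ at $\bar{x}^k$). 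Second, $L(\cdot;\xi)=L(\cdot;\xi_k)+(0,M(\xi-\xi_k))$, so perturbing the parameter is the same as perturbing the right-hand side; in particular any $\xi$-solution can be read off from $\psi_k$. The key computation I would isolate is that the residual map $r(z):=\hat\varphi(z;\bar{x}^k,\xi_k)-\varphi(z;\xi_{k+1})$ has Jacobian $\nabla r(\bar{z}^k)=D(\bar{z}^k)-\varphi'(\bar{z}^k)$, which is block-diagonal with the single nonzero block $-E_g(\bar{z}^k)$; hence $\norm{\nabla r(\bar{z}^k)}\le\norm{E_g(\bar{z}^k)}_F\le\kappa$ by (A4), and by (A2) the local Lipschitz modulus of $r$ on a small ball around $\bar{z}^k$ is at most $\kappa+\epsilon$ with $\epsilon\to 0$ as the radius shrinks.

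For existence and uniqueness of $\bar{z}^{k+1}$, I would set $\Phi(z):=\psi_k(r(z))$ and check that its fixed points in $\mathcal{N}_{\bar{z}^k}$ are precisely the KKT points of $\textrm{P}(\xi_{k+1})$ there (unwinding $z=\psi_k(r(z))$ gives $-\varphi(z;\xi_{k+1})\in N_K(z)$). Then $\norm{\Phi(z_1)-\Phi(z_2)}\le\gamma\norm{r(z_1)-r(z_2)}\le\gamma(\kappa+\epsilon)\norm{z_1-z_2}$, and since $\gamma\kappa<1$ I can fix the radius so that $\omega_k:=\gamma(\kappa+\epsilon)\in(0,1)$. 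Because $r(\bar{z}^k)=\varphi(\bar{z}^k;\xi_k)-\varphi(\bar{z}^k;\xi_{k+1})=(0,-M(\xi_{k+1}-\xi_k))$, one gets $\norm{\Phi(\bar{z}^k)-\bar{z}^k}\le\gamma\norm{M(\xi_{k+1}-\xi_k)}$, so for $\xi_{k+1}$ close enough to $\xi_k$ the map $\Phi$ sends a closed ball into itself. Banach's theorem then furnishes the single-valued function $\bar{z}(\cdot)$, the unique KKT point $\bar{z}^{k+1}=\bar{z}(\xi_{k+1})$, and the Lipschitz bound $\norm{\bar{z}^{k+1}-\bar{z}^k}\le\frac{\gamma}{1-\omega_k}\norm{M(\xi_{k+1}-\xi_k)}$, which I reuse below.

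For the contraction estimate I would write $z^{k+1}=\psi_k(s(z^{k+1}))$ with $s(z):=\hat\varphi(z;\bar{x}^k,\xi_k)-\hat\varphi(z;x^k,\xi_{k+1})$ (so the fixed-point identity reproduces $0\in\hat\varphi(z^{k+1};x^k,\xi_{k+1})+N_K(z^{k+1})$) and use $\bar{z}^{k+1}=\psi_k(r(\bar{z}^{k+1}))$ from the previous step. Lipschitz continuity of $\psi_k$ gives $\norm{z^{k+1}-\bar{z}^{k+1}}\le\gamma\norm{s(z^{k+1})-r(\bar{z}^{k+1})}$. Exploiting that $\hat\varphi(\cdot;\bar{x}^k,\xi_k)$ and $\hat\varphi(\cdot;x^k,\cdot)$ are affine in $z$ with constant Jacobians $D(\bar{z}^k)$ and $D(x^k)$, I would expand this into $[D(\bar{z}^k)-D(x^k)](z^{k+1}-\bar{z}^{k+1})$ plus the linearization error $\varphi(\bar{z}^{k+1};\xi_{k+1})-\hat\varphi(\bar{z}^{k+1};x^k,\xi_{k+1})$. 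The first term is bounded by $\norm{g'(\bar{x}^k)-g'(x^k)}\,\norm{z^{k+1}-\bar{z}^{k+1}}\le L_2\norm{z^k-\bar{z}^k}\,\norm{z^{k+1}-\bar{z}^{k+1}}$ (with $L_2$ a local bound on $\norm{\nabla^2 g}$ from (A2)) and is moved to the left-hand side. The error term I would split as a change-of-linearization-point piece, whose leading part is $E_g(\bar{x}^k,\bar{\lambda}^{k+1})(\bar{x}^k-x^k)$ and is therefore $\le\kappa\norm{z^k-\bar{z}^k}$ up to higher order, plus Taylor-remainder pieces driven by $\norm{\bar{x}^{k+1}-\bar{x}^k}\le\frac{\gamma}{1-\omega_k}\norm{M(\xi_{k+1}-\xi_k)}$, which supply the $c_k\norm{M(\xi_{k+1}-\xi_k)}$ contribution. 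Collecting terms and shrinking $\mathcal{N}_\rho$ so that $\gamma L_2\rho$ and the second-order remainders are small, the coefficient of $\norm{z^k-\bar{z}^k}$ becomes $(\gamma\kappa+\text{small})/(1-\gamma L_2\rho)$, which is in $(0,1)$ by (A4); I rename it $\omega_k$ and absorb the rest into $c_k>0$.

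The hard part will be the bookkeeping of the second-order remainders: I must verify that everything beyond the two leading contributions $\kappa\norm{z^k-\bar{z}^k}$ and $\gamma\norm{M(\xi_{k+1}-\xi_k)}$ is genuinely higher order (products of small quantities or $O(\norm{\cdot}^2)$ Taylor remainders), so that after shrinking $\mathcal{N}_\tau$ and $\mathcal{N}_\rho$ they are absorbed without spoiling $\omega_k<1$, while simultaneously keeping all arguments $s(z^{k+1})$, $r(\bar{z}^{k+1})$ inside $\mathcal{N}_0$ and all iterates inside $\mathcal{N}_{\bar{z}^k}$, the region where (A3) applies.
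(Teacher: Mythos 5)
Your proposal is correct, and its first half coincides with the paper's proof: your $\Phi=\psi_k\circ r$ is exactly the paper's contraction map $\Phi_\xi$ of Step 1.3, with the same self-mapping check via $r(\bar{z}^k)=(0,-M(\xi_{k+1}-\xi_k))$, the same Banach fixed-point conclusion furnishing $\bar{z}(\cdot)$, and the same a priori bound on $\norm{\bar{z}^{k+1}-\bar{z}^k}$ (the paper's \eqref{eq:norm_zk}); your only variation there is that you bound the Lipschitz modulus of $r$ by $\kappa+\epsilon$ via the Jacobian $D(\bar{z}^k)-\varphi'(\bar{z}^k)$ at $\bar{z}^k$ plus continuity, instead of the paper's integral mean-value form with Frobenius-norm bookkeeping (giving $\kappa+\sqrt{3}\varepsilon$). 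The second half is where your decomposition genuinely differs. The paper anchors the estimate at $\bar{z}^k$: it places $\delta(z^{k+1};x^k,\xi_{k+1})$ and $M(\xi_{k+1}-\xi_k)$ in $L(\cdot\,;\xi_{k+1})$, bounds $\norm{z^{k+1}-\bar{z}^k}$, and then needs a triangle-inequality rearrangement (producing the $\norm{z^{k+1}-z^k}$ term and the $(1-2\sqrt{3}\gamma\varepsilon)$ denominators in \eqref{eq:est_4}) before substituting \eqref{eq:norm_zk}. You anchor at $\bar{z}^{k+1}$ instead, writing $z^{k+1}=\psi_k(s(z^{k+1}))$ and $\bar{z}^{k+1}=\psi_k(r(\bar{z}^{k+1}))$ and comparing directly; the affine structure of $\hat\varphi$ in $z$ then yields the exact split into $[D(\bar{z}^k)-D(x^k)](z^{k+1}-\bar{z}^{k+1})$, which you absorb into the left-hand side, plus the linearization error of $\varphi$ at $x^k$ evaluated at $\bar{z}^{k+1}$, whose leading piece gives $\gamma\kappa\norm{z^k-\bar{z}^k}$ and whose remaining pieces are driven by $\norm{M(\xi_{k+1}-\xi_k)}$ or are higher order; this bookkeeping does close, as you anticipate. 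What each route buys: yours makes it transparent that the essential contraction constant is $\gamma\kappa<1$ and avoids the rearrangement step entirely, while the paper's fixes all $\varepsilon$-bounds once in Step 1.1 and never Taylor-expands around the (only implicitly known) point $\bar{z}^{k+1}$, which keeps the remainder analysis you flag as the hard part lighter. Finally, the requirement that $z^{k+1}$ lie in $\mathcal{N}_{\bar{z}^k}$ with $s(z^{k+1})\in\mathcal{N}_0$, which you state explicitly as a condition to maintain, is used tacitly by the paper as well when it applies (A3) to $z^{k+1}$ in Step 2.2, so it is a shared hypothesis rather than a gap in your argument.
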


\begin{proof}
\vskip -0.2cm
The proof is organized in two parts and step by step. The first part proves $\Lambda_k:=\Lambda(\xi_k)\neq\emptyset$ for all $k\geq 0$ by induction and estimates the norm $\norm{\bar{z}^{k+1}-\bar{z}^k}$. The second part proves the inequality \eqref{eq:contr_est}.
\vskip -0.5cm
\setlength{\unitlength}{1mm}
\begin{picture}(60, 40)\label{fig:proof}
  \linethickness{0.075mm}
  \put(20, 0){\vector(0, 1){33}}
  \put(21,32){\color{blue}$\bar{z}(\xi)$}
  \put(20, 0){\vector(1, 0){50}} 
  \put(68, 2){$\xi$}
  {\color{blue}
  \linethickness{0.3mm}
  \qbezier(25, 5)(35, 27)(70, 28)
  }
  \linethickness{0.075mm}
  \put(25,12.2){\line(2,3){5}}
  \multiput(25,0)(0,1){13}{\circle*{0.01}}
  \multiput(30,0)(0,1){20}{\circle*{0.01}}
  \put(45,27.5){\line(6,1){15}}
  \multiput(46,0)(0,1){28}{\circle*{0.01}}
  \multiput(61,0)(0,1){31}{\circle*{0.01}}
  \put(25,12){\circle*{1}}
  {\color{red}\put(25,5){\circle*{1}}}
  \put(30,20){\circle*{1}}
  {\color{red}\put(30,13){\circle*{1}}}
  {\color{red}\put(45,23.7){\circle*{1}}}
  \put(45,27.5){\circle*{1}}
  \put(60,30){\circle*{1}}
  {\color{red}\put(60,27.5){\circle*{1}}}
  \multiput(30,20)(1,0.5){16}{\circle*{0.1}} 
  \put(24,-3){{\scriptsize$\xi_0$}}
  \put(29,-3){{\scriptsize$\xi_1$}}
  \put(44,-3){{\scriptsize$\xi_k$}}
  \put(59,-3){{\scriptsize$\xi_{k+1}$}}
  \put(21, 12){$z^0$}
  \put(20, 5){$\color{blue}\bar{z}^0$}
  \put(44, 29){$z^k$}
  \put(45.3, 20.5){\color{blue}$\bar{z}^k$}
  \put(59, 31){$z^{k+1}$}
  \put(60.5, 24.3){$\color{blue}\bar{z}^{k+1}$}
  \put(18.5,-3){\scriptsize$0$}
  \put(42.0,25.5){\color{red}\makebox(0,0){{\tiny[1]}$\left\{\right.$}}
  \put(62.3,28.8){\color{red}\makebox(0,0){$\left\}\right.${\tiny[2]}}}
  \put(45,0){\color{red}\tiny$\overbrace{\rule{15mm}{0cm}}$}
  \put(51.5,2.2){\color{red}\tiny[4]}
  \put(45,23.5){\color{red}\line(1,0){15}}
  \put(58.4,25.3){\color{red}\makebox(0,0){{\tiny[3]}$\left\{\right.$}}
  \put(20,20){\color{red}\vector(0,1){11}}
  \put(20,31){\color{red}\vector(0,-1){11}}
  \put(19,31){\line(1,0){2}}	
  \put(19,20){\line(1,0){2}}
  \put(21,25){$\mathcal{N}_{\rho}$}
  \put(42,5){\color{red}\vector(1,0){21}}
  \put(63,5){\color{red}\vector(-1,0){21}}
  \put(42,0){\line(0,1){6}}
  \put(63,0){\line(0,1){6}}
  \put(52,5.8){$\mathcal{N}_{\tau}$}
  \put(76,22){\scriptsize{\color{red}[1]}~:~$\norm{z^k-{\color{blue}\bar{z}^k}}$}  
  \put(76,18){\scriptsize{\color{red}[2]}~:~$\norm{z^{k+1}-{\color{blue}\bar{z}^{k+1}}}$}
  \put(76,14){\scriptsize{\color{red}[3]}~:~$\color{blue}\norm{\bar{z}^{k+1}-\bar{z}^{k}}$}
  \put(76,10){\scriptsize{\color{red}[4]}~:~$\norm{\xi_{k+1}-\xi_k}$}
\end{picture}
\vskip 0.5cm
\centerline{\footnotesize \textbf{Fig. 1}. The approximate sequence $\{z^k\}_k$ along the manifold $\bar{z}(\cdot)$ of the KKT points.}
\vskip 0.2cm

\noindent\textbf{Part 1: }    
For $k=0$, $\Lambda_0 \neq\emptyset$ by Assumption (A1). Suppose that $\Lambda_k\neq\emptyset$ for $k\geq 0$, we will show that $\Lambda_{k+1}\neq\emptyset$. We divide the proof into four steps.

\noindent\textit{\underline{Step 1.1}. } We first provide the following estimations. Take any $\bar{z}^k\in\Lambda_k$. We define
\begin{equation}\label{eq:r_k}
r_k(z; \xi) := \hat\varphi(z;\bar{x}^k, \xi_k) - \varphi(z; \xi).
\end{equation}
Since $\gamma\kappa < 1$ by (A4), we can choose $\varepsilon>0$ sufficiently small such that $\gamma\kappa + 5\sqrt{3}\gamma\varepsilon < 1$. By the choice of $\varepsilon$, we also have $c_0 := \kappa + \sqrt{3}\varepsilon \in (0,1/\gamma)$. 
Since $g$ is twice continuously differentiable, there exist neighborhoods $\mathcal{N}_{\tau}\subseteq \mathcal{N}_{\xi_k}$ of $\xi_k$ and $\mathcal{N}_{\rho}\subseteq\mathcal{N}_{\bar{z}^k}$ of a radius $\rho>0$ centered at $\bar{z}^k$ such that:
$r_k(z;\xi)\in\mathcal{N}_{0}$, $\norm{E_g(z)-E_g(\bar{z}^k)}_F\leq \varepsilon$,  $\norm{E_g(z)-E_g(z^k)}_F\leq \varepsilon$, $\norm{g'(x)-g'(\bar{x}^k)}_F\leq\varepsilon$ and $\norm{g'(x)-g'(x^k)}_F\leq\varepsilon$ for all $z\in\mathcal{N}_{\rho}$.

Next, we shrink the neighborhood $\mathcal{N}_{\tau}$ of $\xi_k$, if necessary, such that:
\begin{equation}\label{eq:proof_eq1}
\norm{M(\xi-\xi_k)} \leq \rho(1-c_0)/\gamma.
\end{equation}

\vskip -0.1cm
\noindent\textit{\underline{Step 1.2}.} For any $z, z'\in\mathcal{N}_{\rho}$, we now estimate $\norm{r_k(z;\xi)-r_k(z';\xi)}$. 
From \eqref{eq:r_k} we have
\begin{eqnarray}\label{eq:r_k_est}
r_k(z;\xi) - r_k(z';\xi) &&= \hat\varphi(z;\bar{x}^k,\xi_k)-\hat\varphi(z';\bar{x}^k,\xi_k) - \varphi(z;\xi) + \varphi(z';\xi)\nonumber\\
[-1.5ex]\\[-1.5ex]
&& = \int_0^1B(z_t;\bar{x}^k)(z'-z)dt\nonumber,   
\end{eqnarray}
where $z_t := z + t(z'-z) \in\mathcal{N}_{\rho}$ and
\begin{equation}\label{eq:B_mat}
B(z;\hat{x}) = \begin{bmatrix}E_g(z) & g'(z)^T-g'(\hat{x})^T \\ g'(x)-g'(\hat{x}) & 0\end{bmatrix}. 
\end{equation}
Using the estimations of $E_g$ and $g'$ at \textit{Step 1.1}, it follows from \eqref{eq:B_mat} that 
\begin{eqnarray}\label{eq:B_mat_est}
\norm{B(z_t;\bar{x}^k)} &&\leq \norm{E_g(\bar{z}^k)}_F + \left[\norm{E_g(z_t)-E_g(\bar{z}^k)}_F^2 + 2\norm{g'(x_t)-g'(\bar{z}^k)}_F^2\right]^{1/2} \nonumber\\
[-1.5ex]\\[-1.5ex]
&& \leq \kappa + \sqrt{3}\varepsilon \equiv c_0.\nonumber 
\end{eqnarray}
Substituting \eqref{eq:B_mat_est} into \eqref{eq:r_k_est}, we get
\begin{equation}\label{eq:r_k_est2}
\norm{r_k(z;\xi)-r_k(z';\xi)} \leq c_0\norm{z-z'}. 
\end{equation}

\vskip -0.15cm
\noindent\textit{\underline{Step 1.3}.} Let us define $\Phi_{\xi}(z): = \mathcal{N}_{\bar{z}^k}\cap L(r_k(z;\xi);\xi_k)$. Next, we show that $\Phi_{\xi}(\cdot)$ is a contraction self-mapping onto $\mathcal{N}_{\rho}$ and then show that $\Lambda_{k+1} \neq\emptyset$.
 
Indeed, since $r_k(z;\xi)\in\mathcal{N}_{0}$, applying (A3) and \eqref{eq:r_k_est2}, for any $z, z'\in\mathcal{N}_{\rho}$, one has
\begin{equation}\label{eq:contract_map}
\norm{\Phi_{\xi}(z)-\Phi_{\xi}(z')} \leq \gamma\norm{r_k(z;\xi)-r_k(z';\xi)} \leq \gamma c_0\norm{z-z'}. 
\end{equation}
Since $\gamma c_0 \in (0,1)$ (see \textit{Step 1.1}), we conclude that $\Phi_{\xi}(\cdot)$ is a contraction mapping on $\mathcal{N}_{\rho}$. 
Moreover, since $\bar{z}^k = \mathcal{N}_{\bar{z}^k}\cap L^{-1}(0;\xi_k)$, it follows from (A3) and \eqref{eq:proof_eq1} that
\begin{equation*}
\norm{\Phi_{\xi}(\bar{z}^k)-\bar{z}^k} \leq \gamma\norm{r_k(\bar{z}^k;\xi)} = \gamma\norm{M(\xi-\xi_k)} \leq (1-\gamma c_0)\rho. 
\end{equation*}
Combining the last inequality, \eqref{eq:contract_map} and noting that $\norm{z-\bar{z}^k}\leq \rho$ we obtain
\begin{equation*}
\norm{\Phi_{\xi}(z)-\bar{z}^k} \leq \norm{\Phi_{\xi}(z)-\Phi_{\xi}(\bar{z}^k)} + \norm{\Phi_{\xi}(\bar{z}^k)-\bar{z}^k} \leq \rho,  
\end{equation*}
which proves $\Phi_{\xi}$ is a self-mapping onto $\mathcal{N}_{\rho}$.   
Consequently, for any $\xi_{k+1}\in\mathcal{N}_{\tau}$, $\Phi_{\xi_{k+1}}$ possesses a unique fixed point $\bar{z}^{k+1}$ in $\mathcal{N}_{\rho}$ by virtue of the \textit{contraction principle}. 
This statement is equivalent to $\bar{z}^{k+1}$ is a KKT point of $\textrm{P}(\xi_{k+1})$, i.e. $\bar{z}^{k+1}\in\Lambda(\xi_{k+1})$. Hence, $\Lambda_{k+1}\neq\emptyset$.

\noindent\textit{\underline{Step 1.4}.} Finally, we estimate $\norm{\bar{z}^{k+1} - \bar{z}^k}$. From the properties of $\Phi_{\xi}$ we have
\begin{equation}\label{eq:est_p1}
\norm{\bar{z}^{k+1} - z} \leq (1-c_0\gamma)^{-1}\norm{\Phi_{\xi_{k+1}}(z)-z}, ~~\forall z\in\mathcal{N}_{\rho}.  
\end{equation}
Using this inequality with $z = \bar{z}^k$ and noting that $\bar{z}^k = \Phi_{\xi_k}(\bar{z}^k)$, we have
\begin{equation}\label{eq:est_p1}
\norm{\bar{z}^{k+1} - \bar{z}^k} \leq (1-c_0\gamma)^{-1}\norm{\Phi_{\xi_{k+1}}(z)-\Phi_{\xi_k}(\bar{z}^k)}.  
\end{equation}
Since $\norm{r_k(\bar{z}^k;\xi_k) - r_k(\bar{z}^k;\xi_{k+1})} = \norm{M(\xi_{k+1}-\xi_k)}$, applying again (A3), it follows from \eqref{eq:est_p1} that
\begin{equation}\label{eq:norm_zk}
\norm{\bar{z}^{k+1} - \bar{z^k}} \leq (1-c_0\gamma)^{-1}\gamma\norm{M(\xi_{k+1}-\xi_k)}. 
\end{equation}

\noindent\textbf{Part 2: } Let us define the residual from $\hat\varphi(z;\bar{x}^{k},\xi_{k+1})$ to $\hat\varphi(z;x^k,\xi_{k+1})$ as:
\begin{equation}\label{eq:delta_0}
\delta(z;x^k, \xi_{k+1}) := \hat\varphi(z; \bar{x}^k, \xi_{k+1}) - \hat\varphi(z; x^k, \xi_{k+1}).
\end{equation} 
\vskip -0.15cm
\noindent\textit{\underline{Step 2.1}.} We first provide an estimation for $\norm{\delta(z;x^k, \xi_{k+1})}$. From \eqref{eq:delta_0}
we have
\begin{eqnarray}\label{eq:proof_eq12}
\delta(z;x^k,\xi_{k+1}) &&\!\!\!= \left[\hat\varphi(z; \bar{x}^{k}\!\!, \xi_{k+1}\!) -\varphi(\bar{z}^{k}\!; \xi_{k+1})\right]-
\left[\varphi(z;\xi_{k+1}\!) - \varphi(\bar{z}^{k}\!; \xi_{k+1})\right] \notag\\
&&\!\!\! - \left[\hat\varphi(z; x^k, \xi_{k+1}) - \varphi(z^k; \xi_{k+1})\right] + \left[\varphi(z; \xi_{k+1}) - \varphi(z^k; \xi_{k+1})\right]\nonumber\\
&&\!\!\!=\int_0^1B(z^k_t;x^k)(z -z^k)dt - \int_0^1B(\bar{z}^{k}_t; \bar{x}^{k})(z-\bar{z}^{k})dt\\
&&\!\!\!=\int_0^1\!\!\!\!\left[B(z^k_t;x^k)-B(\bar{z}^{k}_t;\bar{x}^{k})\right](z -z^k)dt\!-\! \int_0^1\!\!\! B(\bar{z}^{k}_t; \bar{x}^{k})(z^k-\bar{z}^{k})dt,\nonumber
\end{eqnarray}
where $z^k_t := z^k + t(z-z^k)$, $\bar{z}^{k}_t := \bar{z}^{k} + t(z-\bar{z}^{k})$ and $B$ is defined by \eqref{eq:B_mat}.
Using the definition of $\hat\varphi$ and the estimations of $E_g$ and $g'$ at \textit{Step 1.1}, it is easy to show that 
\begin{eqnarray}\label{eq:est_1a}
&&\norm{B(z^k_t;x^k)-B(\bar{z}^{k}_t;\bar{x}^{k})} \leq \left[\norm{E_g(z^{k}_t\!) \!-\! E_g(\bar{z}^{k})}_F^2 \!+\! 2\norm{g'(x^k_t)-g'(x^k)}_F^2\right]^{1/2}\nonumber\\
[-1.5ex]\\[-1.5ex]
&& + \left[\norm{E_g(\bar{z}^{k}_t\!) \!-\! E_g(\bar{z}^{k})}_F^2 \!+\! 2\norm{g'(\bar{x}^k_t)-g'(\bar{x}^k)}_F^2\right]^{1/2}  \leq
2\sqrt{3}\varepsilon.\nonumber 
\end{eqnarray}
Similar to \eqref{eq:B_mat_est}, the quantity $B(\bar{z}^{k}_t; \bar{x}^{k})$ is estimated by
\begin{equation}\label{eq:est_1b}
\norm{B(\bar{z}^{k}_t; \bar{x}^{k})} \leq \kappa + \sqrt{3}\varepsilon.  
\end{equation}
Substituting \eqref{eq:est_1a} and \eqref{eq:est_1b} into \eqref{eq:proof_eq12}, we obtain an estimation for $\norm{\delta(z;x^k,
\xi_{k+1})}$ as
\begin{equation}\label{eq:delta_est}
\norm{\delta(z;x^k, \xi_{k+1})} \leq (\kappa + \sqrt{3}\varepsilon)\norm{z^k-\bar{z}^{k}} + 2\sqrt{3}\varepsilon\norm{z-z^k}. 
\end{equation}

\vskip -0.15cm
\noindent\textit{\underline{Step 2.2}.} We finally prove the inequality \eqref{eq:contr_est}.
Suppose that $z^{k+1}$ is a KKT point of \ref{eq:subprob_x}, we have $0\in\hat\varphi(z^{k+1};x^k,\xi_{k+1}) + N_K(z^{k+1})$.
This inclusion implies $\delta(z^{k+1};x^k, \xi_{k+1})\in \hat\varphi(z^{k+1};\bar{x}^{k},\xi_{k+1}) +
N_K(z^{k+1}) \equiv L(z^{k+1};\xi_{k+1})$ by the definition \eqref{eq:delta_0} of $\delta(z^{k+1};x^k, \xi_{k+1})$.
On the other hand, since $0\in\hat\varphi(\bar{z}^{k};\bar{x}^{k},\xi_{k}) + N_K(\bar{z}^{k})$, which is equivalent to
$\delta_1 := M(\xi_{k+1}-\xi_{k}) \in L(\bar{z}^k;\xi_{k+1})$, applying (A3) we get
\begin{eqnarray*}\label{eq:est_3}
\norm{z^{k+1}-\bar{z}^{k}} &&\leq\gamma\norm{\delta(z^{k+1};x^k, \xi_{k+1})-\delta_1} \nonumber\\
[-1.5ex]\\[-1.5ex]
&&\leq \gamma\norm{\delta(z^{k+1};x^k, \xi_{k+1})} + \gamma\norm{M(\xi_{k+1}-\xi_k)}.\nonumber 
\end{eqnarray*}
Combining this inequality and \eqref{eq:delta_est} with $z=z^{k+1}$ to obtain
\begin{eqnarray}\label{eq:est_4a}
\norm{z^{k\!+\!1}\!\!\!-\bar{z}^{k}} \leq \gamma(\kappa \!+\! \sqrt{3}\varepsilon)\norm{z^k \!-\! \bar{z}^k} \!+\!
2\sqrt{3}\gamma\varepsilon\norm{z^{k\!+\!1}\!\!\! -\! z^k} \!+\! \gamma\norm{M(\xi_{k+1}\! -\! \xi_k)}.
\end{eqnarray}
Using the triangular inequality, after a simple arrangement, \eqref{eq:est_4a} implies 
\begin{eqnarray}\label{eq:est_4}
\norm{z^{k+1}-\bar{z}^{k+1}} &&\leq \frac{\gamma(\kappa+3\sqrt{3}\varepsilon)}{1-2\sqrt{3}\gamma\varepsilon}\norm{z^k-\bar{z}^k} +
\frac{1+2\sqrt{3}\gamma\varepsilon}{1-2\sqrt{3}\gamma\varepsilon}\norm{\bar{z}^{k+1}-\bar{z}^k}\nonumber\\
[-1.5ex]\\[-1.5ex]
&& + \frac{\gamma}{1-2\sqrt{3}\gamma\varepsilon}\norm{M(\xi_{k+1}-\xi_k)}.\nonumber 
\end{eqnarray}
Now, let us define $\omega_k := \frac{\gamma(\kappa+3\sqrt{3}\varepsilon)}{1-2\sqrt{3}\gamma\varepsilon}$, $c_k :=
\frac{\gamma}{1-2\sqrt{3}\gamma\varepsilon}\left[\frac{2\sqrt{3}\gamma\varepsilon+1}{1-c_0\gamma} + 1\right]$.
By the choice of $\varepsilon$ at \textit{Step 1.1}, we can easily check that $\omega_k\in (0,1)$ and $c_k > 0$. Substituting \eqref{eq:norm_zk} into \eqref{eq:est_4} and using the definitions of $\omega_k$ and $c_k$, we obtain
\begin{equation*}
\norm{z^{k+1} - \bar{z}^{k+1}} \leq \omega_k\norm{z^k - \bar{z}^k} + c_k\norm{M(\xi_{k+1}-\xi_k)},
\end{equation*}
which proves \eqref{eq:contr_est}. The theorem is proved.
\eoproof
\end{proof}

If $\Gamma \equiv \{\xi\}$ then the RTSCP method collapses to the full-step SCP
method described in Section \ref{se:intro}. Without loss of generality, we can assume that $\xi_k=0$ for all $k\geq 0$.  
The following corollary immediately follows from Theorem \ref{th:contract}.
  
\begin{corollary}\label{co:scp_conver}
\vskip -0.2cm
Suppose that $\set{z^j}_{j\geq 1}$ is the sequence of the KKT points of $\textrm{P}_{\textrm{cvx}}(x^{j-1};0)$ generated by the SCP method described in Section \ref{se:intro}  and that the assumptions of
Theorem \ref{th:contract} hold for $\xi_k=0$. Then 
\begin{equation}\label{eq.2.2.30}
\norm{z^{j+1} - \bar{z}} \leq \omega\norm{z^j - \bar{z}}, ~~\forall j\geq 0,
\end{equation}
where $\omega \in (0,1)$ is the contraction factor.
Consequently, this sequence converges linearly to a KKT point $\bar{z}$ of $\textrm{P}(0)$.
\end{corollary}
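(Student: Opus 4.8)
The plan is to obtain the corollary as a direct specialization of the Contraction Theorem to the stationary situation in which the parameter never changes, i.e. $\xi_k = 0$ for all $k \geq 0$. The first observation is that in this case $M(\xi_{k+1}-\xi_k) = 0$, so the perturbation term in the contraction estimate \eqref{eq:contr_est} vanishes identically and that estimate collapses to
\begin{equation*}
\norm{z^{k+1}-\bar{z}^{k+1}} \leq \omega_k\norm{z^k-\bar{z}^k}.
\end{equation*}
The remaining work is to show that the reference point $\bar{z}^k$ and the factor $\omega_k$ are both independent of $k$.

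For the reference point, I would argue by induction that $\bar{z}^k = \bar{z}$ for all $k$, where $\bar{z}$ is a fixed KKT point of $\textrm{P}(0)$. Since every problem $\textrm{P}(\xi_k)$ is literally $\textrm{P}(0)$, Theorem \ref{th:contract} produces in the neighborhood $\mathcal{N}_{\rho}$ a \emph{unique} KKT point of $\textrm{P}(0)$; applying this at each step forces $\bar{z}^{k+1} = \bar{z}^k$, so all of them coincide with the initial $\bar{z}^0 =: \bar{z}$. For the factor, I would note that the entire construction underlying Theorem \ref{th:contract} --- the neighborhoods $\mathcal{N}_{\rho}$, $\mathcal{N}_{\tau}$, the scalar $\varepsilon$, and the constants $c_0$, $\gamma$, $\kappa$ --- depends only on the fixed pair $(\bar{z},0)$ and therefore does not change from one iteration to the next. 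Consequently $\omega_k = \omega \in (0,1)$ is a single constant, which yields the per-step bound \eqref{eq.2.2.30}.

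To conclude linear convergence, I would verify that the iterates never leave the neighborhood where the estimate holds. Assuming $z^0 \in \mathcal{N}_{\rho}$, the inequality $\norm{z^{j+1}-\bar{z}} \leq \omega\norm{z^j-\bar{z}}$ with $\omega < 1$ gives $\norm{z^j - \bar{z}} \leq \omega^j \norm{z^0 - \bar{z}} \leq \rho$ for every $j$, so each $z^j$ remains inside $\mathcal{N}_{\rho}$ and the estimate is legitimately applicable at every step. Iterating the bound then gives $\norm{z^j - \bar{z}} \leq \omega^j \norm{z^0 - \bar{z}} \to 0$, which is exactly the asserted linear convergence of $\{z^j\}$ to the KKT point $\bar{z}$ of $\textrm{P}(0)$.

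The step I expect to be the main (if minor) obstacle is the claim that $\omega_k$ is genuinely constant in $k$ rather than merely lying in $(0,1)$ for each individual $k$: this requires checking that the stationarity of the parameter makes the whole neighborhood-and-constant apparatus of Theorem \ref{th:contract} invariant under the iteration index, so that the contraction factor can be fixed once and for all as a single $\omega$.
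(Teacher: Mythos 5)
Your proposal is correct and follows exactly the route the paper intends: the paper offers no written argument beyond stating that the corollary ``immediately follows'' from Theorem~\ref{th:contract}, and your specialization to $\xi_k=0$ --- vanishing of the $\norm{M(\xi_{k+1}-\xi_k)}$ term, the uniqueness-based induction giving $\bar{z}^k=\bar{z}$ for all $k$, the resulting $k$-independence of $\omega_k$, and the check that the iterates remain in $\mathcal{N}_{\rho}$ so the estimate can be applied at every step --- is precisely the elaboration that claim requires. The last two points, which you correctly single out as the only nontrivial ones, are indeed the details the paper leaves implicit.
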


\section{Numerical example: control of an underactuated hovercraft}\label{se:example}
\vskip -0.3cm
In this section we apply RTSCP to the control of an underactuated hovercraft. We use the same model as in \cite{Seguchi2003}, which is characterized by the following differential equations: 
\begin{equation}\label{eq:hovercraft}
\begin{cases}
 m\ddot y_1(t) = (u_1(t) + u_2(t))\cos(\theta), \\
 m\ddot y_2(t) = (u_1(t) + u_2(t))\sin(\theta), \\
 I\ddot \theta(t) = r(u_1(t) - u_2(t)),
\end{cases}
\end{equation}
where $y(t)=(y_1(t),y_2(t))^T$ is the coordinate of the center of mass of the hovercraft (see Fig. \ref{fig:1}); $\theta(t)$ represents the
direction of the hovercraft; $u_1(t)$ and $u_2(t)$ are the fan thrusts; $m$ and $I$ are the mass and moment of inertia of the hovercraft, respectively; and $r$ is the distance between the central axis of the hovercraft and the fans.

\setcounter{figure}{1}
\begin{figure}
\centering
\centerline{\includegraphics[angle=0,width=5.5cm, height=2.5cm]{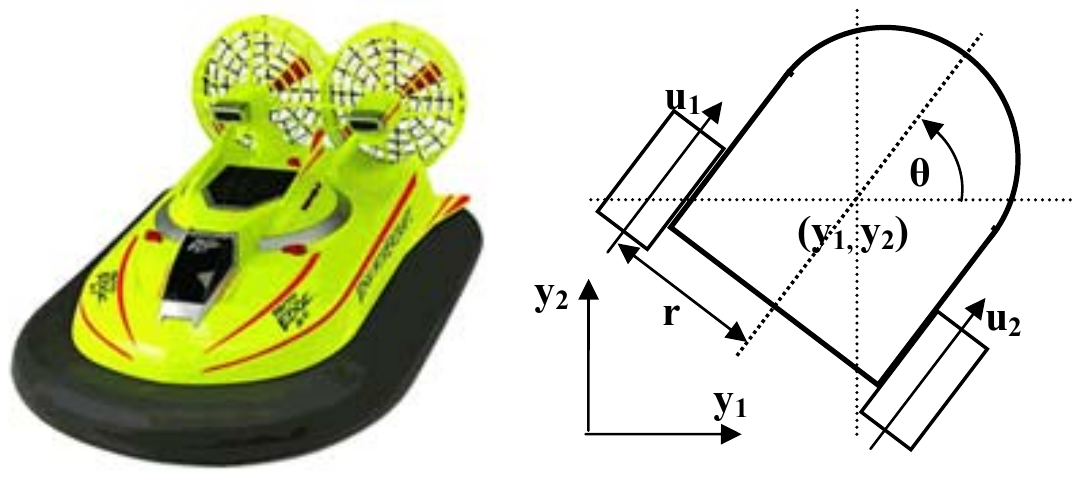}}
\caption{RC hovercraft and its model \cite{Seguchi2003}.}
\label{fig:1}       
\vskip -0.4cm
\end{figure}

The problem considered is to drive the hovercraft from its initial position to the final parking position corresponding
to the origin of the state space while respecting the constraints
\begin{equation}\label{eq:constr}
\underline{u} \leq u_1(t) \leq \bar{u}, ~~ \underline{u} \leq u_2(t) \leq \bar{u}, ~~ \underline{y}_1 \leq y_1(t)\leq \bar{y}_1, ~~
\underline{y}_2 \leq y_2(t)\leq \bar{y}_2.
\end{equation}
To formulate this problem so that we can use the proposed method, we discretize the dynamics of the system using the Euler discretization scheme.
After introducing a new state variable $\xi := (y_1, y_2, \theta, \dot{y}_1, \dot{y_2}, \dot{\theta})^T$ and a control variable $u := (u_1,
u_2)^T$, we can formulate the following optimal control problem:
\begin{equation}\label{eq:ocp}
\begin{split}
\min_{\substack{\xi_0,\dots,\xi_N \\u_0,\dots,u_{N-1}}}~~   & \sum_{n=0}^{N-1} \left[\norm{\xi_n}_Q^2 + \norm{u_n}_R^2\right] + \norm{\xi_N}_S^2 \\
    \text{s.t.}~~~~~~~~                                     & \xi_0=\bar\xi, \\
                                                            & \xi_{n+1} = \phi(\xi_n,u_n) \quad \forall n=0,\dots,N-1,\\
                                                            & (\xi_0,\dots,\xi_N,u_0,\dots,u_{N-1}) \in \tilde\Omega,
\end{split}
\end{equation}
where $\phi(\cdot,\cdot)$ represents the discretized dynamics and the constraint set $\tilde\Omega$ can be easily deduced from \eqref{eq:constr}.
By introducing a slack variable $s$ and using the convex constraint:
\begin{equation}
s\geq\sum_{n=0}^{N-1} \left[\norm{\xi_n}_Q^2 + \norm{u_n}_R^2\right] + \norm{\xi_N}_S^2,
\end{equation}
we can transform \eqref{eq:ocp} into $\textrm{P}(\bar\xi)$ of a variable $x := (s, \xi_0^T,\dots,\xi_N^T, u_0^T,\dots, u_{N-1}^T)^T$ and
the objective function $c^Tx = s$. Note that $\bar{\xi}$ is an online parameter. It plays the role of $\xi_k$ in the RTSCP algorithm
along the moving horizon (see Section \ref{se:RTSCP}).

We implemented the RTSCP algorithm using a primal-dual interior point method for solving the convex subproblem
$\textrm{P}_{\textrm{cvx}}(x^{k-1};\xi_{k})$. We performed a simulation using the same data as in \cite{Seguchi2003}:
$m=0.974\text{kg}$, $I = 0.0125\text{kg}\cdot \text{m}^2$, $r = 0.0485\text{m}$, $\underline{u} = -0.121\text{N}$,  $\bar{u} =
0.342\text{N}, \underline{y}_1 = \underline{y}_2 = -2\text{m}$,
$\bar{y}_1 = \bar{y}_2 = 2\text{m}$, $Q=\text{diag}(5, 10, 0.1, 1, 1, 0.01)$, $S=\text{diag}(5, 15, 0.05, 1, 1, 0.01)$, $R=\text{diag}(0.01,
0.01)$ and the initial condition $\xi_0 = \xi(0) = (-0.38, 0.30, 0.052, 0.0092, -0.0053, 0.002)^T $.

Figure \ref{fig:2} shows the results of the simulation where a sampling time of $\Delta t = 0.05s$ and $N=15$ are used.
The stopping condition used for the simulation is $\norm{y(t)}\leq 0.01$.

\begin{figure}
\vskip -0.2cm
\centering
\centerline{\includegraphics[angle=0,width=8.5cm, height=3.8cm]{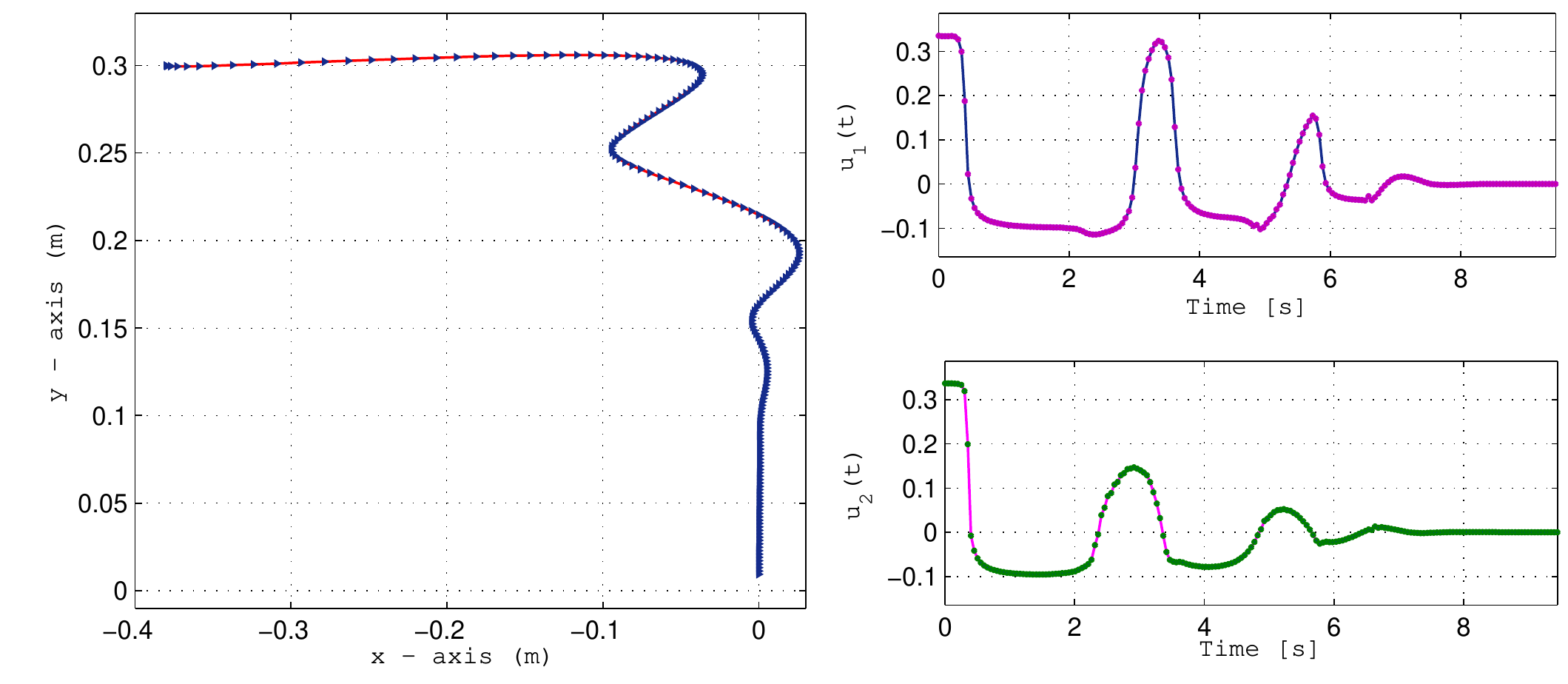}}
\caption{\footnotesize Trajectory of the hovercraft after $t = 9.5$s (left) and control input profile (right).}
\label{fig:2}  
\end{figure}

\vskip 0.2cm

\noindent{\footnotesize\textbf{Acknowledgments.}}
The authors would like to thank the anonymous referees for their comments and suggestions that helped to improve the paper.

{\scriptsize
This research was supported by Research Council KUL: CoE EF/05/006 Optimization in Engineering(OPTEC), GOA AMBioRICS, IOF-SCORES4CHEM, several PhD/postdoc \& fellow grants; the Flemish Government via FWO: PhD/postdoc grants, projects G.0452.04, G.0499.04, G.0211.05, G.0226.06, G.0321.06, G.0302.07, G.0320.08 (convex MPC), G.0558.08 (Robust MHE), G.0557.08, G.0588.09, research communities (ICCoS, ANMMM, MLDM) and via IWT: PhD Grants, McKnow-E, Eureka-Flite+EU: ERNSI; FP7-HD-MPC (Collaborative Project STREP-grantnr. 223854), Contract Research: AMINAL, and Helmholtz Gemeinschaft: viCERP; Austria: ACCM, and the Belgian Federal Science Policy Office: IUAP P6/04 (DYSCO, Dynamical systems, control and optimization, 2007-2011).
}


\end{document}